\newtheorem{thm}{Theorem}[section]
\newtheorem*{thm*}{Theorem}
\newtheorem{conj*}{Conjecture}
\newtheorem*{lemma*}{Lemma}
\newtheorem*{prop*}{Proposition}
\newtheorem{claim}[thm]{Claim}
\newtheorem*{fact}{Fact}
\theoremstyle{definition}
\newtheorem{df}[thm]{Definition}
\theoremstyle{remark}
\newtheorem*{rmk}{Remark}
\newtheorem*{claim*}{Claim}
\newcommand{\bbQ}{\mathbb{Q}}
\newcommand{\bbF}{\mathbb{F}}
\newcommand{\bbC}{\mathbb{C}}
\newcommand{\bbZ}{\mathbb{Z}}
\newcommand{\bbR}{\mathbb{R}}
\newcommand{\QQ}{\bbQ}
\newcommand{\RR}{\bbR}
\newcommand{\CC}{\bbC}
\newcommand{\FF}{\bbF}
\newcommand{\ZZ}{\bbZ}
\newcommand{\cH}{\mathcal{H}}
\DeclareMathOperator{\Aut}{Aut}
\DeclareMathOperator{\tr}{tr}
\DeclareMathOperator{\ord}{ord}
\DeclareMathOperator{\SL}{SL}
\DeclareMathOperator{\PSL}{PSL}
\DeclareMathOperator*{\mysum}{\sum}
\newcommand{\emphh}[2][ ]{%
\ifthenelse{\equal{#1}{ }}{\index{default}{#2} {\emph{#2}}}{\index{default}{#1@#2} {\emph{#2}}}%
}
\def\sumprime{\mathop{\sum{\raise3pt\hbox{${}'$}}}} 
\newcommand{\tto}[1]{%
\ifthenelse{\equal{#1}{}}{\to}{\stackrel{#1}{\to}}}
\newcommand{\fixme}[1]{}
\begin{document}

\title[On modular forms and representations of $\PSL_2(\FF_q)$]{On modular forms of weight 2 and \\ representations of $\PSL_2(\FF_q)$}

\author{Luiz Kazuo Takei}

\address{Department of Mathematics and Statistics, McGill University, Montr\'eal, QC, Canada}

\email{takei@math.mcgill.ca}


\maketitle

\vspace{\baselineskip}

\begin{abstract}
	This is essentially a translated (and explained) version of \cite{hecke1930}, where Hecke shows, for a prime $q$, a relation between the class number $h(-q)$ of $\QQ(\sqrt{-q})$ and the representation of $\PSL_2(\FF_q)$ on the space of holomorphic differentials of $X(q)$.
\end{abstract}

\section{Introduction}

In 1930 Hecke (\cite{hecke1930}) proved an interesting result concerning the representation of $\PSL_2(\FF_q)$ on the space of holomorphic differentials of $X(q)$. Analyzing the character table of this group we see two `special' irreducible representations (here denoted $\pi_+$ and $\pi_-$). If we call $m_+$ and $m_-$ the multiplicity of $\pi_+$ and $\pi_-$ in that representation, then Hecke showed that $m_+ - m_- = h(-q)$. In this paper we present a translation and explanation of the ideas contained in \cite{hecke1930}.

We attempt to use the same notation as Hecle whenever it is defined, otherwise we use standard notation. A notable exception is the choice of notation for the irreducible representations of $\PSL_2(\FF_q)$.

We assume familiarity with representation theory, Riemann surfaces and some knowledge of modular forms at an introductory level (for instance, the first chapters of \cite{dia&shu2005}). When needed, we refer to results in other topics as well.

\section{Fixing notations}

In these notes, $q$ will denote a prime number (which we assume to satisfy $q \equiv 3 \pmod{4}$ and $q > 3$).
\[
	\Gamma(q) = \left\{ \begin{pmatrix} a & b \\ c & d \end{pmatrix} \in \SL_2(\ZZ) \ \bigg| \ \begin{pmatrix} a & b \\ c & d \end{pmatrix} \equiv \begin{pmatrix} 1 & 0 \\ 0 & 1 \end{pmatrix} \pmod q \right\}
\]
\[	
	\Gamma_1(q) = \left\{ \begin{pmatrix} a & b \\ c & d \end{pmatrix} \in \SL_2(\ZZ) \ \bigg| \ \begin{pmatrix} a & b \\ c & d \end{pmatrix} \equiv \begin{pmatrix} 1 & * \\ 0 & 1 \end{pmatrix} \pmod q \right\}
\]
\[
	M_2(\Gamma(q)) := \textrm{space of modular forms of weight $2$ with respect to } \Gamma(q)
\]
(as defined in \cite{dia&shu2005}; in particular, our modular forms are holomorphic on $\cH$ and at the cusps)

\vspace{.5cm}

We have a well-known (right) action of $\SL_2(\ZZ)$ on $M_2(\Gamma(q))$: if $\gamma = \begin{pmatrix} a & b \\ c & d \end{pmatrix} \in \SL_2(\ZZ)$ and $\varphi \in M_2(\Gamma(q))$, then
\[
	(\varphi[\gamma]_2) (z) := (cz+d)^{-2} \varphi(\gamma z)
\]
where $\gamma z := \frac{az+b}{cz+d}$.

Since the elements of $M_2(\Gamma(q))$ are invariant under the action of $\Gamma(q)$ and $-I$, the original action induces an action of $\PSL_2(\FF_q) = \dfrac{\SL_2(\ZZ)}{\{\pm I \} \cdot \Gamma(q)}$.

\vspace{.5cm}

Let $\zeta := \exp\left(\frac{2 \pi i}{q}\right)$ and $P = \begin{pmatrix} 1 & 1 \\ 0 & 1 \end{pmatrix} \in PSL_2(\FF_q)$.

We define the $\CC$-vector space $V := \left\{ f \in M_2(\Gamma(q)) \ | \ f[P]_2 = \zeta f \right\}$ and denote $z := \dim_{\CC} V$.

We also define the following modular curves (which are Riemann surfaces): $X(q) = \Gamma(q) \backslash \cH^*$ and $X_1(q) = \Gamma_1(q) \backslash \cH^*$ (where $\cH^*$ is the union of the upper half-plane and the cusps).

Finally, throughout this paper, if $\pi$ is a representation of a group $G$ on a vector space $V$, $\pi(g)$ denotes the element in $GL(V)$ or its trace depending on the context (sometimes we write $\tr(\pi(g))$ so that it is clear what we mean).

\section{First Remarks}

Note that the action of $\PSL_2(\FF_q)$ on $M_2(\Gamma(q))$ induces a representation (since it is a right action, the representation is given by $\gamma \cdot \varphi = \varphi[\gamma^{-1}]_2$). 

Recall that $M_2(\Gamma(q)) = E_2(\Gamma(q)) \oplus S_2(\Gamma(q))$, where $E_2(\Gamma(q))$ is the Eisenstein space and $S_2(\Gamma(q))$ is the space of cusp forms. This decomposition is also a decomposition of $M_2(\Gamma(q))$ as a representation of $\PSL_2(\FF_q)$. Moreover, $S_2(\Gamma(q))$ is isomorphic to the space of holomorphic differentials of $X(q)$.

$\PSL_2(\FF_q)$ can be naturally identified with a subgroup of $\Aut(X(q))$ and its action on $S_2(\Gamma(q))$ defined in the previous section is (via these identifications) the action of a subgroup of $\Aut(X(q))$ on the space of holomorphic differentials of $X(q)$.

If $\pi$ denotes the representation from last paragraph, then it is known that $\pi(\gamma) + \overline{\pi(\gamma)} = 2 - t$ where $t$ is the number of fixed points of $\gamma \in \PSL_2(\FF_q) \subseteq \Aut(X(q))$ (this is called the Lefschetz Fixed Point Formula; cf. \cite{farkas&kra1992}). So, if $\pi(\gamma) \in \RR$, we can obtain $\pi(\gamma)$ by computing $(2-t)/2$.

Looking at the character table of $\PSL_2(\FF_q)$ we see that all the irreducible representations have real traces except in the case $q \equiv 3 \pmod{4}$ (where there are two irreducible representations which do not have real traces).

Thus, from now on we assume $q \equiv 3 \pmod{4}$ and $q > 3$ (the case $q = 3$ is simple and can be treated individually). Hecke actually studied all the cases in \cite{hecke1930} (for the interested reader: Hecke defines $\varepsilon = (-1)^{(q-1)/2}$ and deals with both cases simultaneously).

\section{Character Table of $PSL_2(\FF_q)$}

In this section we recall the character table of $\PSL_2(\FF_q)$ (following the presentation given in \cite{casselmana}).


Representations of the conjugacy classes are:
\[
	\begin{array}{cccc}
		\pm \begin{pmatrix} 1 & 0 \\ 0 & 1 \end{pmatrix} & & & \\
		\pm \begin{pmatrix} t & 0 \\ 0 & 1/t \end{pmatrix} & \sim & \pm \begin{pmatrix} 1/t & 0 \\ 0 & t \end{pmatrix} & (t \neq \pm 1) \\
		\pm \begin{pmatrix} a & -b \\ b & a \end{pmatrix} & \sim & \pm \begin{pmatrix} a & b \\ -b & a \end{pmatrix} & (a^2 + b^2 = 1) \\
		\pm \begin{pmatrix} 1 & 1 \\ 0 & 1 \end{pmatrix} & & & \\
		\pm \begin{pmatrix} 1 & -1 \\ 0 & 1 \end{pmatrix} & & & 		
	\end{array}
\]

If we let $E = \FF_q(\sqrt{-1})$, then $N^1_{E / \FF_q}$ embeds in $\SL_2(\FF_q)$ via $a + b \sqrt{-1} \mapsto \begin{pmatrix} a & -b \\ b & a \end{pmatrix}$. Since $N^1_{E / \FF_q} / \{ \pm 1 \}$ has a unique element of order $2$ (namely $\sqrt{-1}$), it also has a unique character of order $2$ (call it $\rho_0$).

The irreducible representations of $\PSL_2(\FF_q)$ are the following:
\begin{itemize}
	\item the trivial representation of dimension $1$
	\item the Steinberg representation
	\item representations $\pi_{\chi}$ parametrized by the characters $\chi$ of the group $\FF_q^{\times} / \{ \pm 1 \}$
	\item representations $\pi_{\rho}$ parametrized by the characters $\rho \neq \rho_0$ of the group $N^1_{E / \FF_q} / \{ \pm 1 \}$
	\item representations $\pi_+$ and $\pi_-$, corresponding to the character $\rho_0$
\end{itemize}

Finally, we give the character table:

\begin{table}[H]
	$
	\begin{array}{| c | c | c | c | c | c | c| }
		\hline & id & St & \pi_{\chi} & \pi_{\rho} & \pi_+ & \pi_- \\ \hline
		I = \begin{pmatrix} 1 & 0 \\ 0 & 1 \end{pmatrix} & 1 & q & q+1 & q-1 & (q-1)/2 & (q-1)/2 \\ \hline
		\begin{pmatrix} t & 0 \\ 0 & 1/t \end{pmatrix} & 1 & 1 & \chi(t) + \chi(1/t) & 0 & 0 & 0 \\ \hline
		\begin{pmatrix} a & -b \\ b & a \end{pmatrix} & 1 & -1 & 0 & -\rho(\epsilon) - \rho(1 / \epsilon) & -\rho_0(\epsilon) & -\rho_0(\epsilon) \\ \hline
		P = \begin{pmatrix} 1 & 1 \\ 0 & 1 \end{pmatrix} & 1 & 0 & 1 & -1 & \overline{\mathfrak{G}} & \mathfrak{G} \\ \hline
		P^{-1} = \begin{pmatrix} 1 & -1 \\ 0 & 1 \end{pmatrix} & 1 & 0 & 1 & -1 & \mathfrak{G} & \overline{\mathfrak{G}} \\ \hline
	\end{array}
	$
	
	\caption{Irreducible representations of $\PSL_2(\FF_q)$}
	\label{tbl:irreps}
\end{table}

\vspace{.5cm}

where $\varepsilon = a + b \sqrt{-1} \in E$,
\[
	\mathfrak{G} = \mysum_{\left( \frac{x}{q} \right) = 1} \exp(2 \pi i x / q) = \mysum_{\left( \frac{x}{q} \right) = 1} \zeta^x
\]
and $\overline{\mathfrak{G}}$ is its complex conjugate.

\section{Computing $z$ using representation theory}

In this section we will compute $z$ viewing $M_2(\Gamma(q))$ as a representation of $\PSL_2(\FF_q)$.

\subsection{General remarks on representations of $\PSL_2(\FF_q)$}

For each representation $\pi$ and each $n \in \{ 0, \dotsc, q-1 \}$, let $p_{\pi}(n)$ denote the multiplicity of $\zeta^n$ viewed as an eigenvalue of $\pi(P^{-1})$. (Sometimes we shall simply write $p$ instead of $p_{\pi}$). With this notation, we have the following:
\[
	f_\pi := \tr(\pi(I)) = p(0) + p(1) + \dotsb + p(q-1)
\]
\[
	\tr(\pi(P)) = p(0) 1 + p(1) \zeta + \dotsb + p(q-1) \zeta^{q-1}
\]
(recall that $\pi(\gamma)$ can be diagonalized for any $\gamma \in \PSL_2(\FF_q)$).

So, given $f_\pi$ and $\tr(\pi(P))$, we can determine $p(0), p(1), \dotsb, p(q-1)$.

We can therefore compute $p(n)$ for each of the irreducible representations (cf. table \ref{tbl:irreps}) obtaining the following:

\vspace*{.1cm}

\begin{center}
	\begin{tabular}{r c l}
		$id$ & & $p(0) = 1$ and $p(n) = 0$ for all $n>0$ \vspace*{.05cm} \\
		$St$ & & $p(n) = 1$ for all $n$ \vspace*{.05cm} \\
		$\pi_\chi$ & & $p(0) = 2, p(1) = \dotsb = p(q-1) = 1$ \vspace*{.1cm} \\
		$\pi_\rho$ & & $p(0) = 0, p(1) = \dotsb = p(q-1) = 1$ \\
		$\pi_+$ & & $p(n) = \left\{ \begin{array}{c c l} 1 & \textrm{, } & \left( \frac{n}{q} \right) = 1  \\ 0 & \textrm{, } & \textrm{ otherwise }  \end{array} \right.$ \\
		$\pi_-$ & & $p(n) = \left\{ \begin{array}{c c l} 1 & \textrm{, } & \left( \frac{n}{q} \right) = -1  \\ 0 & \textrm{, } & \textrm{ otherwise }  \end{array} \right.$
	\end{tabular}
\end{center}

As an example of these computations, let us determine $p_{\pi_+}(n)$ for all $n$. We have the equations
\[
	\frac{q-1}{2} = p(0) + p(1) + \dotsb + p(q-1)
\]
\[
	\begin{array}{c c l}
		\mysum\limits_{\left( \frac{x}{q} \right) = 1} \zeta^x & = & p(0)1 + p(1) \zeta + \dotsb + p(q-1) \zeta^{q-1} \\
		 & = & [p(0) - p(q-1)]1 + [p(1) - p(q-1)] \zeta + \dotsb + [p(q-2) + p(q-1)]\zeta^{q-2}
	\end{array}
\]
Hence, for $0 \leq n \leq q-2$
\[
	\begin{array}{ccl}
		p(n) - p(q-1) & = & \left\{ \begin{array}{ccl}
										1 & , & \left( \frac{n}{q} \right) = 1 \\
										0 & , & \textrm{ otherwise}
									\end{array} \right.
	\end{array}
\]

Now, since there are exactly $(q-1)/2$ squares in $\FF_q^{\times}$ the first equation reads
\[
	\frac{q-1}{2} = (q-1) p(q-1) + \frac{q-1}{2}
\]
and so
\[
	p(q-1) = 0
\]

This yields
\[
	\begin{array}{ccl}
		p(n) & = & \left\{ \begin{array}{ccl}
								1 & , & \left( \frac{n}{q} \right) = 1 \\
								0 & , & \textrm{ otherwise}
							\end{array} \right.
	\end{array}
\]

So, if we have a representation $W$ of $\PSL_2(\FF_q)$ which decomposes as
\[
	W = \alpha \cdot St \ \oplus \ \beta_+ \cdot \pi_+ \ \oplus \ \beta_- \cdot \pi_- \ \oplus \ \mysum_{\chi} \gamma_{\chi} \cdot \pi_{\chi} \ \oplus \ \mysum_{\rho} \delta_{\rho} \cdot \pi_{\rho}
\]
then
\[
	\dim_{\CC} \left\{ w \in W \ | \ P^{-1}w = \zeta w \right\} = \alpha + \beta_+ + \mysum \gamma_{\chi} + \mysum \delta_{\rho}
\]

\subsection{Applying the general remarks to our case}
\label{ssec:applying_gen_rmks}

Now we apply the previous subsection to the representation on $M_2(\Gamma(q))$ (recall that if $\gamma \in \PSL_2(\FF_q)$ and $\varphi \in M_2(\Gamma(q))$, then $\gamma \varphi = \varphi[\gamma^{-1}]_2$).

As we saw earlier, $M_2(\Gamma(q)) = E_2(\Gamma(q)) \ \oplus \ S_2(\Gamma(q))$. So, we may study $E_2(\Gamma(q))$ and $S_2(\Gamma(q))$ separately.

It is known that, as a representation,
\[
	E_2(\Gamma(q)) = St \ \oplus \ 2 \mysum_{\chi} \pi_{\chi}
\]

Let
\begin{equation}
\label{eqn:decomp_of_s2}
	S_2(\Gamma(p)) = x \cdot St \ \oplus \ y_+ \cdot \pi_+ \ \oplus \ y_- \cdot \pi_- \ \oplus \ \mysum_{\chi} u_{\chi} \cdot \pi_{\chi} \ \oplus \ \mysum_{\rho} v_{\rho} \cdot \pi_{\rho}
\end{equation}
be the decomposition of the space of cusp forms (notice $id$ does not occur because of the well known fact that $\dim M_2(\SL_2(\ZZ)) = 0$).

Then, since there are $\frac{q + 1}{4} - 1$ irreducible representations of the form $\pi_{\chi}$, the previous subsection gives us
\[
	z = x + 1 + y_+ + \mysum_{\chi} u_{\chi} + 2 \left( \frac{q + 1}{4} - 1 \right) + \mysum_{\rho}v_{\rho}
\]

Our goal for the rest of this section is to simplify this expression. For this, we will define some notations and use them to help us (these are actually introduced in \cite{hecke1928}).

\vspace{.5cm}

\begin{df} Given the decomposition (\ref{eqn:decomp_of_s2}), we define
\[
	\begin{array}{ccc}
		U = \mysum\limits_{\chi} u_{\chi} & , & V = \mysum\limits_{\rho} v_{\rho} \\
		Y = y_- + y_+ & , & S = Y + 2U + 2V
	\end{array}
\]
\end{df}

\vspace{.5cm}

Using this notation, we obtain
\begin{equation}
\label{eqn:formula_z}
	z = \frac{y_+ - y_-}{2} + \frac{q-1}{2} + \frac{Y}{2} + U + V + x
\end{equation}

\vspace{.5cm}

\begin{df} If $H \leq \PSL_2(\FF_q)$, we define
\[
	Z(H) := \dim_{\CC} \{ f \in S_2(\Gamma(q)) \ | \ f[\gamma]_2 = f, \forall \gamma \in H \}
\]
\end{df}

\vspace{.5cm}

Notice that $Z(H)$ is just the multiplicity of the identity representation of $H$ in $S_2(\Gamma(q))$. Thus, by representation theory,
\begin{equation}
\label{eqn:formula_Z(H)}
    Z(H) = \frac{1}{|H|} \mysum_{h \in H} \left( x St(h) + y_+ \pi_+(h) + y_- \pi_-(h) + \mysum_{\chi} u_{\chi} \pi_{\chi}(h) + \mysum_{\rho} v_{\rho} \pi_{\rho}(h) \right)
\end{equation}

Notice $H_1 := \left\{ \begin{pmatrix} a & -b \\ b & a \end{pmatrix} \in \PSL_2(\FF_q) \ \bigg| \ a, b \in \FF_q \right\}$ is a subgroup of order $(q+1)/2$. Also, $H_2 := \left\{ \begin{pmatrix} t & 0 \\ 0 & 1/t \end{pmatrix} \in \PSL_2(\FF_q) \ \bigg| \ t \in \FF_q^\times \right\}$ is a subgroup of order $(q-1)/2$ (because $\FF_q^\times$ is cyclic of order $q-1$ and $I = -I$ in $\PSL_2(\FF_q)$). This motivates our next definition:

\begin{df} $Z(\frac{q+1}{2}) := Z(H_1)$ \ and \ $Z(\frac{q-1}{2}) := Z(H_2)$.
\end{df}

Using equation (\ref{eqn:formula_Z(H)}), we obtain the following:

\begin{equation}
\label{eqn:specific_formula_Z(H)}
    \begin{array}{c}
        Z(\frac{q-1}{2}) = 3x + Y + 2U + 2V \\ \\
        Z(\frac{q+1}{2}) = x + Y + 2U + 2V
    \end{array}
\end{equation}

Hence,
\begin{equation}
\label{eqn:specific_formula_x_S}
    \begin{array}{c}
        x = \frac{1}{2} Z(\frac{q-1}{2}) - \frac{1}{2} Z(\frac{q+1}{2}) \\ \\
        S = \frac{3}{2} Z(\frac{q+1}{2}) - \frac{1}{2} Z(\frac{q-1}{2})
    \end{array}
\end{equation}

So,
\begin{equation}
    \frac{Y}{2} + U + V + x = \frac{S}{2} + x = \frac{1}{4} \left( Z\left(\frac{q+1}{2}\right) + Z\left(\frac{q-1}{2}\right) \right)
\end{equation}

Note $Z(H)$ is $\dim S_2(\Gamma)$ for a certain congruence subgroup $\Gamma$ and, so, is equal to the genus of the corresponding modular curve. Hence one can compute (Hecke computes this in \cite{hecke1928})
\begin{equation}
    Z\left(\frac{q+1}{2}\right) + Z\left(\frac{q-1}{2}\right) = \frac{q^2-1}{6} - (q-1)
\end{equation}

So,
\begin{equation}
\label{eqn:z_repn}
    z = \frac{y_+ - y_-}{2} + \frac{q-1}{4} + \frac{q^2-1}{24}
\end{equation}

\section{Computing $z$ using Riemann-Roch}

\subsection{Some general definitions and results}

Before we actually compute $z$ we will introduce some definitions and present some general facts about automorphic factors and its relation with modular forms. For the most part we will follow \cite{rankin77}.

Throughout this subsection, $\Gamma \subseteq \SL_2(\ZZ)$ denotes a congruence subgroup, $\overline{\Gamma}$ its image in $\PSL_2(\ZZ)$ and if $T = \begin{pmatrix} a & b \\ c & d \end{pmatrix} \in \Gamma$ and $z \in \cH$, then $T:z = cz + d$. Moreover, $P = \begin{pmatrix} 1 & 1 \\ 0 & 1 \end{pmatrix} \in \SL_2(\ZZ)$.

\begin{df}
	A map $\nu : \Gamma \times \cH \rightarrow \CC$ is called an \emph{automorphic factor of weight $k$} if
	
	\begin{enumerate}
		\item $z \mapsto \nu(T,z)$ is holomorphic for all $T \in \Gamma$
		\item $|\nu(T,z)| = |T:z|^k$ for all $T \in \Gamma$ and $z \in \cH$
		\item $\nu(ST, z) = \nu(S,Tz) \nu(T,z)$ for all $S,T \in \Gamma$ and $z \in \cH$
		\item if $-I \in \Gamma$, then $\nu(-T,z) = \nu(T,z)$ for all $T \in \Gamma$ and $z \in \cH$
	\end{enumerate}
\end{df}

Let us denote $\mu(T,z) := (T:z)^k$. One can prove that $\mu(T,z) = v(T) \mu(T,z)$, where $|v(T)| = 1$. The map $v$ is called the \emph{multiplier system} associated with $\nu$. Note that it determines $\nu$.

\begin{df}
	If $L \in \SL_2(\ZZ)$, then we define the \emph{parameter of the cusp} $L \infty$ (with respect to $\Gamma$ and $\nu$) to be the only $\kappa_L \in [0,1)$ such that $v(L P^{n_L} L) = \exp(2 \pi i \kappa_L)$ where $n_L$ is the width of the cusp $L \infty$ with respect to $\Gamma$. (One proves that this depends only on the orbit of $L \infty$ via the action of $\Gamma$).
\end{df}

\begin{df}
	An \emph{unrestricted modular form} of weight $k$ for the group $\Gamma$ with respect to the automorphic factor $\nu$ (or multiplier system $v$) is a holomorphic function $\varphi: \cH \rightarrow \CC$ such that $f(Tz) = \nu(T,z) f(z)$ for all $T \in \Gamma$ and $z \in \cH$. The space of all such functions is denoted $M'_k(\Gamma, v)$. (Note: the definition found in \cite{rankin77} is a little more general; moreover, the notation used is slightly different).
\end{df}

\begin{fact}
	Let $\varphi \in M'_k(\Gamma,v)$, $L \in \SL_2(\ZZ)$ and $n_L$ be the width of the cusp $L \infty$ with respect to $\Gamma$. Define $\varphi[L]_k (z) := \mu(L,z)^{-1} \varphi(Lz)$ and $\varphi[L]_k^*(z) := \exp(-2 \pi i \kappa_L z / n_L) \varphi[L]_k (z)$. Then
	
	\begin{enumerate}
		\item $\varphi[L]_k (z) = (z + n_L) = \exp(2 \pi i \kappa_L) \varphi[L]_k(z)$
		\item $\varphi[L]_k^*(z + n_L) = \varphi[L]_k^*(z)$
	\end{enumerate}
\end{fact}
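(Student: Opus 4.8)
\emph{Proof idea.} Everything hinges on one observation: the translation $z \mapsto z + n_L$ is the action of $P^{n_L}$ by fractional linear transformations, and by the very definition of the width $n_L$ of the cusp $L\infty$ with respect to $\Gamma$ the matrix $M_L := L P^{n_L} L^{-1}$ (or its negative --- the two act identically on $\cH$) lies in $\Gamma$. So I would prove (1) by unwinding definitions. Starting from $\varphi[L]_k(z + n_L) = \mu(L, P^{n_L}z)^{-1}\varphi(L P^{n_L}z)$, I would rewrite $L P^{n_L}z = M_L(Lz)$ and apply the transformation law defining $M'_k(\Gamma,v)$ to get $\varphi(L P^{n_L}z) = \nu(M_L, Lz)\,\varphi(Lz)$.

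Next I would invoke the exact cocycle relation $(ST):z = (S:Tz)\,(T:z)$ for the factor $T\mapsto (T:z)$, applied with $(S,T) = (M_L, L)$ and using $M_L L = L P^{n_L}$ together with $(P^{n_L}):z = 1$; this yields $(M_L:Lz)\,(L:z) = (L:P^{n_L}z)$, and raising to the $k$-th power gives $\mu(L,P^{n_L}z)^{-1}\,\mu(M_L,Lz)\,\mu(L,z) = 1$. Writing $\nu(M_L,Lz) = v(M_L)\,\mu(M_L,Lz)$ then collapses the whole expression to
\[
	\varphi[L]_k(z + n_L) = v(M_L)\,\mu(L,z)^{-1}\varphi(Lz) = v(M_L)\,\varphi[L]_k(z),
\]
and finally $v(M_L) = v(L P^{n_L} L^{-1}) = \exp(2\pi i \kappa_L)$ is exactly the definition of the cusp parameter $\kappa_L$, which is (1). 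Part (2) is then immediate: substituting $z + n_L$ into the definition of $\varphi[L]_k^*$ produces an extra factor $\exp(-2\pi i \kappa_L n_L/n_L) = \exp(-2\pi i \kappa_L)$, which cancels the factor $\exp(2\pi i \kappa_L)$ supplied by (1), and $\varphi[L]_k^*(z)$ remains.

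The one step that is not pure bookkeeping is the passage from the identity $(M_L:Lz)(L:z) = (L:P^{n_L}z)$ of complex numbers to the corresponding identity for $\mu$: when $k \notin \ZZ$ one has $(ab)^k = a^k b^k$ only up to a root of unity, so $\mu(L,P^{n_L}z)^{-1}\mu(M_L,Lz)\mu(L,z)$ equals $1$ only up to such a root of unity, which must then be matched against the consistency condition built into the multiplier system $v$. For the integral weights that actually occur in this paper the computation above is literally exact; for general real weight I would simply cite the corresponding discussion in \cite{rankin77}. Everything else is a routine unwinding of definitions.
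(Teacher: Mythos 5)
Your proof is correct and is the standard verification of this fact; the paper itself states it without proof (deferring to \cite{rankin77}), and your argument --- conjugating the translation $z \mapsto z + n_L$ through $L$ into the element $L P^{n_L} L^{-1} \in \Gamma$, applying the transformation law defining $M'_k(\Gamma,v)$, and cancelling the $\mu$-factors via the cocycle identity $(M_L : Lz)(L:z) = (L : z + n_L)$ --- is exactly the intended one. You were also right to isolate the only delicate point, namely that for non-integral $k$ the passage from this identity of complex numbers to the corresponding identity for $\mu$ holds only up to a root of unity absorbed by the consistency condition on the multiplier system; for the integral weight $k=2$ actually used in this paper the computation is literally exact, so nothing further is needed.
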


\begin{df}
	A function $\varphi \in M'_k(\Gamma, v)$ is called a \emph{modular form} of weight $k$ for the group $\Gamma$ with respect to the automorphic factor $\nu$ (or multiplier system $v$) if $\varphi$ is holomorphic at the cusps.
\end{df}

\begin{rmk}
	A function $\varphi \in M'_k(\Gamma, v)$ is said to be \emph{holormophic at the cusps} if for all $L \in \SL_2(\ZZ)$, $\varphi[L]_k^*(z) = \mysum\limits_{m = N_L}^{\infty} a_m t^m$ for some $N_L \in \ZZ_{\geq 0}$ where $t = \exp(2 \pi i z / n_L)$.
\end{rmk}

\begin{df}
	If $\varphi \in M_k(\Gamma, v) \backslash \{ 0 \}$ and $z \in \cH^*$, then we define the \emph{order of $\varphi$ at $z$} by
	\[
		\ord(\varphi, z, \Gamma) = \left\{
										\begin{array}{lcl}
											\frac{\ord(\varphi, z)}{\overline{\Gamma}_{z}} & , & \textrm{ if } z \in \cH \\
											\kappa_L + N_L & , & \textrm{ if } z = L \infty
										\end{array}
									\right.
	\]
	(where $\ord(\varphi, z)$ is just the order of $\varphi$ at $z$ as a holomorphic function on $\cH$, $\overline{\Gamma}_{z}$ is the stabilizer of $z$ in $\overline{\Gamma}$ and $N_L$ is as in the previous remark such that $a_{N_L} \neq 0$).
\end{df}

\begin{fact} Let $\varphi \in M_k(\Gamma, v)$. Then
    \[
    	\varphi[L]_k(z) = \exp(2 \pi i \kappa_L z / n_L) \mysum\limits_{m = N_L}^{\infty} a_m(L) \exp(2 \pi i z / n_L)
    \]
    (This equality justifies the definition of the order of $\varphi$ at a cusp)
\end{fact}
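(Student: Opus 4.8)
The plan is to obtain the displayed identity directly from the definition of $\varphi[L]_k^*$ together with the characterization of holomorphy at the cusps in the preceding remark, using Fact~(2) to guarantee that the relevant $q$-expansion actually exists as a convergent series.

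First I would use that, since $\varphi \in M_k(\Gamma, v)$, by definition $\varphi$ is holomorphic at the cusps, so for the fixed $L \in \SL_2(\ZZ)$ the preceding remark applies and yields
\[
\varphi[L]_k^*(z) = \sum_{m = N_L}^{\infty} a_m(L)\, t^m, \qquad t = \exp(2\pi i z / n_L),
\]
for some $N_L \in \ZZ_{\geq 0}$ (taking $N_L$ minimal, so that $a_{N_L}(L) \neq 0$, which is exactly the datum recorded in the definition of $\ord$). To see that this is a genuine convergent power series rather than a formal expression, I would note that by Fact~(2) the holomorphic function $\varphi[L]_k^*$ on $\cH$ satisfies $\varphi[L]_k^*(z + n_L) = \varphi[L]_k^*(z)$, hence factors through $z \mapsto t = \exp(2\pi i z / n_L)$ as a holomorphic function on the punctured unit disk $\{0 < |t| < 1\}$; "holomorphic at the cusp" means precisely that this function extends holomorphically across $t = 0$, and the above expansion is then its Taylor series at $0$, convergent for $\Imaginary z$ large (and, since $\varphi[L]_k^*$ is holomorphic on all of $\cH$, in fact on all of $\cH$).

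Next I would simply solve for $\varphi[L]_k$ in the defining relation $\varphi[L]_k^*(z) = \exp(-2\pi i \kappa_L z / n_L)\,\varphi[L]_k(z)$: multiplying through by $\exp(2\pi i \kappa_L z / n_L)$ gives
\[
\varphi[L]_k(z) = \exp(2\pi i \kappa_L z / n_L)\sum_{m = N_L}^{\infty} a_m(L)\exp(2\pi i m z / n_L),
\]
which is the asserted formula (with the summand being $a_m(L)\, t^m$). I would then close with the remark that this makes the definition $\ord(\varphi, L\infty, \Gamma) = \kappa_L + N_L$ natural: as $\Imaginary z \to \infty$ one has $t \to 0$, so the leading term of $\varphi[L]_k(z)$ is $a_{N_L}(L)\exp\bigl(2\pi i(\kappa_L + N_L)z / n_L\bigr)$, i.e.\ $\kappa_L + N_L$ is the "order of vanishing" of $\varphi$ at $L\infty$ read off in the coordinate adapted to that cusp.

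There is essentially no obstacle here: the statement is a bookkeeping consequence of the definitions of $M_k(\Gamma, v)$, of $\varphi[L]_k^*$, and of holomorphy at the cusps, and the derivation is pure algebraic rearrangement. The only step deserving a word of care is the passage from "holomorphic at the cusp" to a convergent $q$-expansion, which rests on Fact~(2) (periodicity) and the removable-singularity theorem on the punctured disk; this I would state explicitly but not belabor.
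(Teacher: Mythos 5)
Your derivation is correct and is precisely the unwinding of definitions that the paper intends: the statement appears there as a bare Fact with no proof, being an immediate consequence of the definition of $\varphi[L]_k^*$ and of holomorphy at the cusps, plus the periodicity $\varphi[L]_k^*(z+n_L)=\varphi[L]_k^*(z)$ to justify the convergent $t$-expansion. Note that you have also silently corrected a typo in the paper's displayed formula, which omits the factor $m$ in the exponent of the summand $\exp(2\pi i m z/n_L)$.
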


\begin{df}
	Given $\varphi \in M_k(\Gamma, v) \backslash \{ 0 \}$, we define
	\[
		\ord(\varphi, \Gamma) = \mysum_{z \in \Gamma \backslash \cH^*} \ord(\varphi, z, \Gamma)
	\]
\end{df}

\begin{thm}(\textit{theorem 4.1.4 in \cite{rankin77}})
\label{thm:valence_formula}
	If $\varphi \in M_k(\Gamma, v) \backslash \{ 0 \}$ then $\ord(\varphi, \Gamma) = \dfrac{\mu k}{12}$, where $\mu = \left[ \PSL_2(\ZZ) : \overline{\Gamma} \right]$.
\end{thm}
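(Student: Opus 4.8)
The plan is to prove this by the classical argument‑principle (\emph{valence}) computation on a fundamental domain for $\overline{\Gamma}$, following the outline of the level‑one case. Fix a fundamental domain $\mathcal{F} \subseteq \cH^{*}$ for $\overline{\Gamma}$, realised as a polygon whose vertices are the cusps and elliptic fixed points lying in $\mathcal{F}$ and whose sides are paired by a set of generators of $\overline{\Gamma}$; by perturbing the sides within $\cH$ (keeping the pairing) we may assume $\varphi$ has no zero on $\partial\mathcal{F}$ away from the vertices. Excise from $\mathcal{F}$ a small horocyclic neighbourhood of each cusp and a small circular arc around each elliptic vertex to obtain a truncated region $\mathcal{F}_{\varepsilon}$ on whose boundary $\varphi$ does not vanish, and apply the argument principle:
\[
    \frac{1}{2\pi i}\oint_{\partial\mathcal{F}_{\varepsilon}}\frac{d\varphi}{\varphi} \;=\; \sum_{z}\ord(\varphi,z),
\]
the sum running over the finitely many non‑elliptic, non‑cuspidal zeros of $\varphi$ inside $\mathcal{F}$, each counted once since $\overline{\Gamma}_{z}$ is then trivial.

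Next I would evaluate the contour integral side by side. For a pair of sides identified by some $T \in \overline{\Gamma}$, the transformation law $\varphi(Tz) = \nu(T,z)\varphi(z)$, together with $\nu(T,z) = v(T)(T{:}z)^{k}$ and $|v(T)| = 1$, gives $d\log\varphi(Tz) = d\log\varphi(z) + k\,d\log(T{:}z)$ --- the constant $v(T)$ drops out --- so the two oppositely oriented sides together contribute $-\tfrac{k}{2\pi i}\int d\log(T{:}z)$. Summing over all side‑pairs and carrying out the Gauss--Bonnet bookkeeping for the hyperbolic polygon $\mathcal{F}$, whose area equals $\tfrac{\pi\mu}{3}$, produces the term $\tfrac{\mu k}{12}$. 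Around an elliptic vertex $z$ of order $n = |\overline{\Gamma}_{z}|$ the excised arc subtends total angle $2\pi/n$, hence contributes $-\tfrac{1}{n}\ord(\varphi,z) = -\ord(\varphi,z,\Gamma)$. Around a cusp $L\infty$, the expansion $\varphi[L]_{k}(z) = \exp(2\pi i\kappa_{L} z/n_{L})\sum_{m\ge N_{L}}a_{m}(L)\exp(2\pi i m z/n_{L})$ recorded in the Fact above shows the horocyclic arc contributes $-(\kappa_{L} + N_{L}) = -\ord(\varphi,L\infty,\Gamma)$. Collecting all contributions and rearranging gives exactly $\sum_{z\in\overline{\Gamma}\backslash\cH^{*}}\ord(\varphi,z,\Gamma) = \tfrac{\mu k}{12}$.

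An alternative worth keeping in reserve reduces to the level‑one valence formula: the product $\Psi := \prod_{j=1}^{\mu}\varphi[L_{j}]_{k}$ over coset representatives $L_{1},\dots,L_{\mu}$ of $\overline{\Gamma}\backslash\PSL_{2}(\ZZ)$ is an unrestricted modular form of weight $k\mu$ for $\SL_{2}(\ZZ)$, and $\Psi^{12}$ has trivial multiplier system because $\SL_{2}(\ZZ)^{\mathrm{ab}} \cong \ZZ/12\ZZ$; applying the classical identity $\ord(\Psi^{12},\SL_{2}(\ZZ)) = \tfrac{12 k\mu}{12} = k\mu$, hence $\ord(\Psi,\SL_{2}(\ZZ)) = \tfrac{k\mu}{12}$, and then matching the divisor of $\Psi$ over the level‑one fundamental domain with the $\overline{\Gamma}$‑orbits of the divisor of $\varphi$ --- which is precisely the information the weighted order $\ord(\cdot,\cdot,\Gamma)$ encodes --- yields the same conclusion.

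The step I expect to be the main obstacle is the side‑pairing computation: setting up $\mathcal{F}$ with its side identifications, tracking orientations, and correctly evaluating $\sum_{T}\int d\log(T{:}z)$. This is where the hyperbolic area of $\mathcal{F}$, and hence the factor $\mu/12$, enters, and where sign errors and the delicate apportioning of the vertex angles between the Gauss--Bonnet term and the elliptic/cuspidal excisions are easiest to commit. The multiplier system $v$ adds only bookkeeping --- most visibly in pinning the cusp contribution to $\kappa_{L} + N_{L}$ rather than $N_{L}$ --- but, since $v(T)$ is locally constant, introduces no genuine difficulty.
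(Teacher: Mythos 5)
The paper does not prove this statement at all --- it is imported verbatim as Theorem 4.1.4 of \cite{rankin77} --- and your primary argument, the argument-principle computation over a truncated fundamental domain in which the locally constant multiplier $v(T)$ drops out of $d\log\varphi$ and the hyperbolic area $\mu\pi/3$ produces the factor $\mu k/12$, is precisely the proof given in that source. Your outline is sound (the cusp contribution $-(\kappa_L+N_L)$ and the elliptic weighting by $1/|\overline{\Gamma}_{z}|$ are handled correctly, and the one step you defer, the side-pairing/Gauss--Bonnet bookkeeping, is indeed the only substantive computation), and your reserve strategy of reducing to the level-one valence formula via $\Psi=\prod_{j}\varphi[L_{j}]_{k}$ and $\Psi^{12}$ is an equally valid, arguably cleaner, alternative.
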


This theorem is sometimes called `Valence Formula'.

\subsection{Computing $z$}
\label{ssec:computing_z}

We are now going to use Riemann-Roch to compute $z$. (throughout this section, we view $P = \begin{pmatrix} 1 & 1 \\ 0 & 1 \end{pmatrix}$ in $\SL_2(\ZZ)$ or in $\PSL_2(\FF_q)$ interchangeably depending on the context).

First, using that $\Gamma_1(q) = \left< \Gamma(q), P \right>$ we notice that $V = M_2(\Gamma_1(q), v)$ where $v$ is the multiplier system associated with the automorphic factor $\nu$ given by
\[
	\begin{array}{ccccc}
		\nu & : & \Gamma_1(q) \times \cH & \longrightarrow & \CC \\
		    &   & (\gamma,z) & \longmapsto & (cz+d)^2 \zeta^b
	\end{array}
\]
where $\gamma = \begin{pmatrix} a & b \\ c & d \end{pmatrix}$. So $v : \Gamma_1(q) \rightarrow \CC$ is given by $v(\gamma) = \zeta^b$.

Now fix $\varphi \in V \backslash \{ 0 \}$ and define $\widetilde{V} := \left\{ \frac{\varphi_1}{\varphi} \ \big| \ \varphi_1 \in V \right\} \subseteq \CC(X_1(q)) = $ set of rational functions on $X_1(q)$. Note that $\widetilde{V} \cong V$ (as $\CC$-vector spaces) and, hence, $z = \dim_{\CC} \widetilde{V}$.

The idea is to write $\widetilde{V} = L(D) = \{ f \in \CC(X_1(q)) \mid div(f) \geq -D \}$ for a suitable divisor $D$. We then need to find the degree of $D$ in order to apply Riemann-Roch.

$D$ is basically the ``divisor'' of $\varphi$ but first we have to understand what we mean by ``divisor'' of $\varphi$. Notice the order of $\varphi$ as defined in the previous section might not be an integer for elliptic points and cusps. So we have to deal with those points.

First note that since $q \geq 3$ then $\Gamma_1(q)$ has no elliptic elements. Hence we only need to deal with cusps. We need to find all cusps with non-zero parameter ($\kappa_L \neq 0$).

Let $\dfrac{r}{s}$ be a cusp of $\Gamma_1(q)$ (and assume $\gcd(r,s) = 1$) and $L = \begin{pmatrix} r & x \\ s & y \end{pmatrix} \in \SL_2(\ZZ)$ and assume $n = n_L$ is the width of the cusp $r/s = L \infty$. Then
\begin{equation}
\label{eqn:stab_cusp}
    L P^n L^{-1} = \begin{pmatrix} 1 - nrs & nr^2 \\ -ns^2 & 1 + nrs \end{pmatrix}
\end{equation}

Note that if $n \equiv 0 \pmod{q}$, then $\kappa_L = 0$. If $n \not\equiv 0 \pmod{q}$, then $s \equiv 0 \pmod{q}$ (otherwise $n \neq n_L$). So the cusps $r/s$ that are problematic are the ones with $s \equiv 0 \pmod{q}$. One checks that these are represented by
\begin{equation}
\label{eqn:bad_cusps}
    \dfrac{r}{q} \ , \ r = 1, 2, \dotsc, \frac{q-1}{2}
\end{equation}
(and these are not $\Gamma_1(q)$-equivalent).

Hence, by (\ref{eqn:stab_cusp}), we see that $n_L = 1$ for all the cusps in (\ref{eqn:bad_cusps}). Moreover, for those cusps, $v(L P^{n_L} L^{-1}) = \exp(2 \pi i r^2/q) = \zeta^{r^2}$ and, thus, $\kappa_L = \left\{ \frac{r^2}{q} \right\} = $ the fractional part of $r^2/q$.

The conclusion is that $\widetilde{V} = L(D)$ where $D$ is a divisor of degree $m = \mu / 6 - \mysum\limits_{r=1}^{(q-1)/2} \left\{ \frac{r^2}{q} \right\}$ where $\mu = \left[ \PSL_2(\ZZ) : \overline{\Gamma_1(q)} \right] = (q^2 - 1)/2$ (by theorem \ref{thm:valence_formula}).

\begin{claim}
    Let $g$ be the genus of $X_1(q)$ (which is known to be $g = (q-5)(q-7)/24$), then $m \geq 2g - 2$.
\end{claim}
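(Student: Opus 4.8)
The plan is to compare the explicit degree $m = \mu/6 - \sum_{r=1}^{(q-1)/2}\{r^2/q\}$ with $2g-2$, where $\mu = (q^2-1)/2$ and $g = (q-5)(q-7)/24$. First I would compute $\mu/6 = (q^2-1)/12$ and $2g - 2 = (q-5)(q-7)/12 - 2 = (q^2 - 12q + 35 - 24)/12 = (q^2 - 12q + 11)/12$. So the inequality $m \geq 2g-2$ becomes, after multiplying by $12$,
\[
    (q^2 - 1) - 12\sum_{r=1}^{(q-1)/2}\left\{\frac{r^2}{q}\right\} \;\geq\; q^2 - 12q + 11,
\]
i.e.
\[
    \sum_{r=1}^{(q-1)/2}\left\{\frac{r^2}{q}\right\} \;\leq\; q - 1.
\]
Thus the entire claim reduces to bounding the sum of fractional parts of $r^2/q$ for $r = 1, \dots, (q-1)/2$.

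Next I would evaluate that sum. For $1 \leq r \leq (q-1)/2$, write $r^2 \equiv s_r \pmod q$ with $s_r \in \{1, \dots, q-1\}$; then $\{r^2/q\} = s_r/q$. As $r$ ranges over $1, \dots, (q-1)/2$, the residues $r^2 \bmod q$ run exactly once over the set of nonzero quadratic residues mod $q$ (since $r$ and $-r$ give the same square, and there are $(q-1)/2$ of them). Hence
\[
    \sum_{r=1}^{(q-1)/2}\left\{\frac{r^2}{q}\right\} \;=\; \frac{1}{q}\sum_{\left(\frac{a}{q}\right)=1} a,
\]
the sum being over the $(q-1)/2$ quadratic residues $a$ in $\{1,\dots,q-1\}$. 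So I need an upper bound on $\sum_{(\frac{a}{q})=1} a$, the sum of the quadratic residues in the interval $(0,q)$.

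The key input here is the classical class number formula: for $q \equiv 3 \pmod 4$ and $q > 3$, the sum of the quadratic residues minus the sum of the quadratic nonresidues (both in $\{1,\dots,q-1\}$) equals $-q\,h(-q)$ (up to a standard normalizing factor depending on $q \bmod 8$; in any case it is a negative multiple of $q$ times the class number). Since the residues and nonresidues together partition $\{1,\dots,q-1\}$ with total sum $q(q-1)/2$, we get
\[
    \sum_{\left(\frac{a}{q}\right)=1} a \;=\; \frac{q(q-1)}{4} - \frac{q}{2}\cdot(\text{positive class-number term}) \;\leq\; \frac{q(q-1)}{4}.
\]
Therefore $\sum_{r=1}^{(q-1)/2}\{r^2/q\} \leq (q-1)/4 \leq q-1$, which is far more than enough. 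Even without invoking the class number formula, the crude bound $\sum_{(\frac{a}{q})=1} a \leq \sum_{a=1}^{q-1} a = q(q-1)/2$ already gives $\sum \{r^2/q\} \leq (q-1)/2 \leq q-1$, so the claim follows by a completely elementary estimate.

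The main obstacle is essentially bookkeeping rather than mathematics: one must be careful that $q \equiv 3 \pmod 4$ forces $r^2 \not\equiv 0$ and ensures the $\{r^2/q\}$ are genuinely the residues $s_r/q$ with $1 \leq s_r \leq q-1$ (so each fractional part is $< 1$, never $0$), and one must confirm the stated formulas for $g$ and $\mu$ plug in correctly. Since the reduced inequality $\sum\{r^2/q\} \leq q-1$ holds with enormous room to spare — the left side is at most $(q-1)/2$ termwise — no delicate estimate is needed, and the proof is short once the reduction to this inequality is made.
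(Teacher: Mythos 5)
Your proposal is correct and follows essentially the same route as the paper: both reduce $m \geq 2g-2$ to the inequality $\sum_{r=1}^{(q-1)/2}\{r^2/q\} \leq q-1$ via the same arithmetic, and both conclude by the trivial observation that the sum of $(q-1)/2$ fractional parts is less than $(q-1)/2$. The extra discussion of quadratic residue sums and the class number formula is not needed here (the paper only invokes that material later, to compute $z$ exactly), but it does no harm.
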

\begin{proof}

    $m - 2g + 2 = (q^2-1)/12 - \mysum\limits_{r=1}^{(q-1)/2} \left\{ \frac{r^2}{q} \right\} - (q-5)(q-7)/12 + 2 = (12q-36)/12 - \mysum\limits_{r=1}^{(q-1)/2} \left\{ \frac{r^2}{q} \right\} + 2 = q - 1 - \mysum\limits_{r=1}^{(q-1)/2} \left\{ \frac{r^2}{q} \right\} > 0$.
\end{proof}

So Riemann-Roch gives us $z = \dim_{\CC} \widetilde{V} = m - g + 1$.

Denote $\chi(n) = \left( \frac{n}{q} \right)$ the Legendre symbol. Since $\left\{ \frac{r^2}{q} \right\} = \frac{r^2 \mod{q}}{q}$ we obtain that
\[
    \mysum\limits_{r=1}^{(q-1)/2} \left\{ \frac{r^2}{q} \right\} = \frac{1}{2} \mysum\limits_{n=1}^{q-1} \frac{n}{q}(1 + \chi(n)) = \frac{q-1}{4} + \frac{1}{2q} \mysum_{n=1}^{q-1} n \chi(n)
\]

Hence, we can compute
\[
    z = m - g + 1 = \dotsb = \frac{q^2 + 6q - 7}{24} - \frac{1}{2q} \mysum_{n=1}^{q-1} n \chi(n)
\]

Now, using Dirichlet's class number formula, we obtain
\begin{equation}
\label{eqn:z_RR}
    z = \frac{q^2 + 6q - 7}{24} - \frac{1}{2}h(-q)
\end{equation}
where $h(-q) = $ class number of $\QQ(\sqrt{-q}) / \QQ$.

\section{Conclusion and final remark}

Using equation (\ref{eqn:z_repn}) from section \ref{ssec:applying_gen_rmks} and equation (\ref{eqn:z_RR}) from section \ref{ssec:computing_z} we finally obtain what we wanted:
\[
	m_+ - m_- = h(-q)
\]

As a final remark, we note that a different proof of this result of Hecke can be found in \cite{casselmana}.

\bibliography{takei}

\begin{thebibliography}{Ran77}

\bibitem[Cas]{casselmana}
Bill Casselman.
\newblock Langlands' fundamental lemma for {$\SL(2)$}.
\newblock \url{http://www.math.ubc.ca/~cass/research/pdf/SL2.pdf}.

\bibitem[DS05]{dia&shu2005}
Fred Diamond and Jerry Shurman.
\newblock {\em A first course in modular forms}, volume 228 of {\em Graduate
  Texts in Mathematics}.
\newblock Springer-Verlag, New York, 2005.

\bibitem[FK92]{farkas&kra1992}
H.~M. Farkas and I.~Kra.
\newblock {\em Riemann surfaces}, volume~71 of {\em Graduate Texts in
  Mathematics}.
\newblock Springer-Verlag, New York, second edition, 1992.

\bibitem[Hec28]{hecke1928}
Erich Hecke.
\newblock {\"Uber ein Fundamentalproblem aus der Theorie der Elliptischen
  Modulfunktionen}.
\newblock {\em Abh. Math. Sem. Univ. Hamburg}, 6:235--257, 1928.

\bibitem[Hec30]{hecke1930}
Erich Hecke.
\newblock {\"Uber das Verhalten der Integrale 1. Gattung bei Abbildungen,
  insbesondere in der Theorie der elliptischen Modulfunktionen}.
\newblock {\em Abh. Math. Sem. Univ. Hamburg}, 8:271--281, 1930.

\bibitem[Ran77]{rankin77}
Robert~A. Rankin.
\newblock {\em Modular forms and functions}.
\newblock Cambridge University Press, Cambridge, 1977.

\end{thebibliography}
\bibliographystyle{alpha}

\end{document}